\DeclareMathOperator{\dist}{dist}
\newcommand{\R}{\mathbb{R}}
\newcommand{\bu}{\mathbf{u}}
\newcommand{\bv}{\mathbf{v}}
\newcommand{\GRAD}{\nabla}
\DeclareMathOperator{\DIV}{div}
\newcommand{\diff}{\, \mbox{\rm d}}
\newcommand{\bX}{{\mathbf{X}}}
\newcommand{\bef}{{\mathbf{f}}}
\newcommand{\bg}{{\mathbf{g}}}
\newcommand{\bH}{{\mathbf{H}}}
\newcommand{\bL}{{\mathbf{L}}}
\newcommand{\bn}{{\mathbf{n}}}
\newcommand{\bx}{{\mathbf{x}}}
\newcommand{\bU}{{\mathbf{U}}}
\newcommand{\calO}{{\mathcal{O}}}
\newcommand{\calA}{{\mathcal{A}}}
\newcommand{\ermk}{\hfill\ensuremath{\blacksquare}}
\newcommand{\TheTitle}{The Darcy problem with porosity depending exponentially on the pressure}
\newcommand{\ShortTitle}{Exponential porosity}
\newcommand{\TheAuthors}{Z.K.~Birhanu, T.~Mengesha, A.J.~Salgado}
\headers{\ShortTitle}{\TheAuthors}
\title{\TheTitle}
\author{
  Zerihun Kinfe Birhanu\thanks{School of Mathematics and Statistics, Hawassa University, Ethiopia.
    (\email{zerihunk@hu.edu.et}, \url{https://www.hu.edu.et}).}
      \and
    Tadele Mengesha\thanks{Department of Mathematics, University of Tennessee, Knoxville, TN 37996, USA.
    (\email{mengesha@utk.edu}, \url{https://www.math.utk.edu/\string~mengesha/}).}
      \and
  Abner J.~Salgado\thanks{Department of Mathematics, University of Tennessee, Knoxville, TN 37996, USA.
    (\email{asalgad1@utk.edu}, \url{https://www.math.utk.edu/\string~abnersg})}
}
\date{Draft version of \today.}
\begin{document}

\maketitle

\begin{abstract}
We consider the flow of a viscous incompressible fluid through a porous medium. We allow the permeability of the medium to depend exponentially on the pressure and provide an analysis for this model. We study a splitting formulation where a convection diffusion problem is used to define the permeability, which is then used in a linear Darcy equation. We also study a discretization of this problem, and provide an error analysis for it.
\end{abstract}

\begin{keywords}
Porous media flow, Darcy equations, finite elements.
\end{keywords}

\begin{AMS}
35Q35,         
76S05,         
76Dxx,         
65N15,         
65N30,         
\end{AMS}

\section{Introduction}
\label{sec:intro}

In \cite{MR2292356} a hierarchy of models for fluid flow through a porous medium was developed within the context of mixture theory. It was shown that the classical Darcy's model of porous medium flow is the simplest of this hierarchy, and some extensions and variations of it are proposed and justified. Supported by experimental evidence, the reasonableness of this hierarchy has been rigorously demonstrated in \cite{MR2292356}.

One of the simplest models proposed in \cite{MR2292356} is Darcy's model but with a porosity coefficient that depends on the pressure, that is
\begin{equation}
\label{eq:Darcyp}
    \alpha(p) \bu + \nabla p = \bef, \qquad \nabla \cdot \bu = 0.
\end{equation}
Here $p$ is the pressure within the fluid, $\bu$ is its velocity, and $\bef$ represents an external force acting on the fluid. Reference \cite{MR2769053} studied this model in the case when the porosity $\alpha$ is a bounded and smooth function,  see also \cite{MR2727929}. The authors of \cite{MR2769053} also developed a heuristic analysis of the case of exponential dependence on pressure, that is
\begin{equation}
\label{eq:permeability}
  \alpha(p) = \alpha_0 \exp[ \gamma p],
\end{equation}
where $\alpha_0$ and $\gamma$ are positive parameters. The proposed formulation reduced this problem to the solution of two linear equations: a convection diffusion problem and a linear Darcy model, this formulation will be reviewed in section~\ref{sec:contprob} below. The analysis of each of the resulting discrete linear problems has been investigated in  \cite{MR2769053} but the rigorous well posedness of this strategy remained an open problem. The first goal of this work is to attempt to fill this gap. Under some assumptions on the problem data we will show that the splitting strategy is meaningful and, thus, we will use it to define a solution to our problem. Reference \cite{MR2769053} also proposed a discretization of this split formulation, and provided an error analysis for it, provided the discrete solution to the convection diffusion equation remained positive, see \cite[(4.15)]{MR2769053}. This assumption, however, was not verified and this will be the second goal of this work. Our second goal is to close the gaps in the error analysis for the proposed numerical method.

Our presentation is organized as follows. In Section~\ref{sec:prelim} we establish notation and recall some useful facts. Our problem of interest is presented in Section~\ref{sec:contprob}. The splitting formulation is introduced in Section~\ref{sub:Kirchoff}, and its analysis is presented in Section~\ref{sub:analysissplit}. We introduce a suitable, realistic, and reasonable set of assumptions on the geometry and problem data which guarantee that the split formulation of our problem is well posed. the discretization is described and analyzed in Section~\ref{sec:discrete}. Finally, some numerical illustrations of the positivity that is at the heart of our analysis  are presented in Section~\ref{sec:Numerics}.

\section{Notations and technical tools}

\label{sec:prelim}
Throughout the paper, $\Omega \subset \R^d$ with $d \in \{2,3,4\}$, is a bounded domain with Lipschitz boundary. The analysis can be extended to higher dimensions under suitable integrability and regularity of the data.
Whenever $X(D)$ is a normed space of functions over $D$, we indicate by $\| \cdot \|_X$ its norm. $X(D)'$ denotes the dual of $X(D)$. If the spatial domain needs to be indicated, then we will denote it by $\| \cdot \|_{X(D)}$. For $r \in [1,\infty)$, we denote the Banach space of Lebesgue $r$--integrable functions by $L^{r}(\Omega)$ with the norm
$
\|v\|_{L^r}^r = \int_{\Omega}|v|^{r} \diff x . 
$
 For $r=\infty$, $L^{\infty}(\Omega)$ represents  the space of essentially bounded measurable functions on $\Omega$, with the usual norm. For $k$ positive integer and $1\leq r\leq\infty,$ $W^{k, r}(\Omega)$ denotes the space of functions in $L^{r}(\Omega) $ whose weak partial derivatives of order up to $k$ are all in $L^{r}(\Omega)$. With the norm 
 $
 \|v\|_{W^{k,r}}^r = \|v\|^{r}_{L^{r}} + \sum_{|i|\leq k}\|\partial^{i}v\|_{L^r}^{r}, 
 $ the space 
 $W^{k, r}(\Omega)$ is a Banach space.  
For $0<s<1$, we also use the notation $W^{s,r}(\Omega)$ to denote the set of functions $v$ in $L^{r}(\Omega)$ with 
\[
|v|_{W^{s,r}}^{r}=\int_{\Omega}\int_{\Omega}{|v(y)-v(x)|^{r}\over |x-y|^{d+rs}} \diff x \diff y < \infty. 
\]
$W^{s,r}(\Omega)$ is a Banach space with its natural norm $\|v\|_{W^{s,r}}^r = \|v\|^{r}_{L^{r}}+ |v|_{W^{s,r}}^{r}$. 
For $r=2$, we set $H^{s}(\Omega)=W^{s, 2}(\Omega)$ for $s$ integer or $s\in (0, 1)$. For vector--valued functions we use boldface and the spaces of these functions are denoted, for instance,
by $\bL^{r}(\Omega).$ We also need the space $\bH(\DIV, \Omega)$ which is defined as
\[
  \bH(\DIV, \Omega)=\{\bv \in \bL^{2}(\Omega): \DIV \bv \in L^{2}(\Omega)\}. 
\]

We use several facts about Sobolev spaces. The first is the trace property, namely, owing to the fact that the boundary of $\Omega$ is Lipschitz, if $v\in H^{s}(\Omega)$ for $s\in (\tfrac12, 1]$, then the trace of $v$ on $\partial \Omega$, which we denote simply by $v|_{\partial \Omega}$, belongs to the space $H^{s-1/2}(\partial \Omega)$. Moreover, there is  constant $C$  with the estimate 
\[
\|v|_{\partial \Omega}\| _{H^{s-1/2}}\leq C \|v\|_{H^{s}}
\]
 See for instance Grisvard \cite[Theorem 1.5.1.2]{Grisvard}. 
 For $\Gamma\subseteq \partial \Omega$ with $\mathcal{H}^{d-1}(\Gamma) >0,$ we say $v\in H^{1/2}(\Gamma)$ belongs to $ H_{00}^{1/2} (\Gamma)$ if its zero--extension to $\partial \Omega$ belongs to $H^{1/2}(\partial \Omega)$. 
 
 We will also recall that vector fields in $\bH(\DIV, \Omega)$ have a well defined trace of their normal component along the boundary of $\Omega$. Namely, if $\bn(\bx)$ is the outward normal vector at $\bx\in \partial \Omega$, then for any $\bv\in \bH(\DIV, \Omega)$ we have that $\bv\cdot \bn|_{\partial \Omega}$ is in $H^{1/2}(\partial \Omega)'$ and its action  is defined  via the divergence formula
 \[
 \langle \bv\cdot \bn, u|_{\partial \Omega}\rangle_{\partial \Omega} := \int_{\Omega}  {\bf v} \cdot \nabla u \diff x  + \int_{\Omega} \DIV \bv \,u \diff x,\quad \forall u\in H^{1}(\Omega). 
 \]    
 In the event $\bv \cdot \bn|_{\partial\Omega}$ is Lebesgue integrable, then the duality pairing $ \langle\cdot, \cdot\rangle_{\partial \Omega}$ is a mere integration over $\partial \Omega$. 
 
 The second property of Sobolev spaces that will be used frequently is the fact that they embed into function spaces of higher integrability. Precisely, from Sobolev embedding theorem, we have that for $0< s\leq 1, $
 \[
 H^{s}(\Omega) \hookrightarrow L^{2^\ast}(\Omega),\quad\text{where $2^\ast = {{2d\over d-2s}}$ if $d\neq 2s$, and any $2^\ast>2$ if $d=2s.$}
 \]  
 along with the estimate: there is a universal constant $C = C(\Omega,s)$ such that 
 \[
 \|v\|_{L^{2^\ast}} \leq C\|v\|_{H^{s}}, \quad \forall v\in H^{s}(\Omega). 
 \] 
Notice that if $s=1$ and $d\leq 4$, then $2^\ast \geq 4$.
 
\section{The continuous problem}
\label{sec:contprob}
We begin by providing the exact formulation of our problem at hand. We assume that the boundary of $\Omega$ is divided into two pieces: $\Gamma_w$ and $\Gamma$, with $\mathcal{H}^{d-1}(\Gamma) \mathcal{H}^{d-1}(\Gamma_w) > 0$. The problem we are interested in reads
\begin{equation}
\label{eq:NLDarcyStrong}
  \begin{dcases}
    \alpha(p) \bu + \GRAD p = \bef, & \text{ in } \Omega, \\
    \DIV \bu = 0, & \text{ in } \Omega, \\
    p = 0, & \text{ on } \Gamma_w, \\
    \bu \cdot \bn = g, & \text{ on } \Gamma,
  \end{dcases}
\end{equation}
with the permeability function defined in \eqref{eq:permeability}. While $\bef$ and $g$ are given data, the unknowns are the velocity $\bu$ and pressure $p$ of the fluid.

\subsection{The splitting formulation}
\label{sub:Kirchoff}
Let us now recall the transformation that allowed \cite{MR2769053} to write this problem as two linear ones. This will be useful in identifying regularity requirements for the data $\bef$ and $g$. Take the first equation in \eqref{eq:NLDarcyStrong} and divide it by $\alpha$. Incompressibility then implies that
\begin{equation}
\label{eq:takediv}
  \DIV\left( e^{-\gamma p }\GRAD p \right) = \DIV( e^{-\gamma p } \bef).
\end{equation}
Define $q=e^{-\gamma p}$ and note that
\[
  \GRAD q = -\gamma e^{-\gamma p}\GRAD p,
\]
so that \eqref{eq:takediv} can be rewritten as
\[
  -\Delta q = \gamma q \DIV\bef + \gamma \bef \cdot \GRAD q.
\]
In addition, since $p=0$ on $\Gamma_w$ we have that $q=1$ there. If we assume that $\bef$ is sufficiently smooth to have a normal trace on $\Gamma$ then taking a normal trace of the first equation in \eqref{eq:NLDarcyStrong} we obtain
\[
  \alpha(p) g + \partial_n p = \bef \cdot \bn.
\]
This, for the variable $q$ means
\[
  \partial_n q + \gamma q \bef \cdot \bn = \alpha_0 \gamma g.
\]
In conclusion, for the variable $q$ we have obtained the following convection diffusion problem
\begin{equation}
\label{eq:qvarStrong}
  \begin{dcases}
    -\Delta q  - \gamma \bef \cdot \GRAD q - \gamma q \DIV\bef= 0, & \text{ in } \Omega, \\
    q = 1, & \text{ on } \Gamma_w,\\
    \partial_n q + \gamma q \bef \cdot \bn = \alpha_0 \gamma g, & \text{ on } \Gamma.
  \end{dcases}
\end{equation}
This motivates the following strategy to solve \eqref{eq:NLDarcyStrong}.
\begin{enumerate}[$\bullet$]
  \item Find $q$ that solves \eqref{eq:qvarStrong}.
  
  \item Define
  \begin{equation}
  \label{eq:alphatilde}
    \tilde\alpha(x) = \frac{\alpha_0}{q(x)}, \qquad x \in \bar\Omega.
  \end{equation}
  
  \item Find $(\bU,P)$ that solve
  \begin{equation}\label{eq:UPDarcy}
    \begin{dcases}
      \tilde \alpha \bU + \GRAD P = \bef, & \text{ in } \Omega, \\
      \DIV \bU = 0, & \text{ in } \Omega, \\
      P = 0, & \text{ on } \Gamma_w, \\
      \bU \cdot \bn = g, & \text{ on } \Gamma.
    \end{dcases}  
    \end{equation} 

\end{enumerate}
If $(\bu, p)$ solves \eqref{eq:NLDarcyStrong}, and $p\in L^{\infty}(\Omega),$ then $\tilde\alpha$ in \eqref{eq:alphatilde} will be a function that is bounded from below and above by positive numbers.  Equation \eqref{eq:UPDarcy} is now the classical linear Darcy's equation and its solution coincides with $(\bu, p)$.  The advantage of the splitting strategy is that it gives a meaning to a solution of a nonlinear system by transforming it into two linear systems when the set up leading to this transformation is applicable.     

\subsection{Analysis of the problem}
\label{sub:analysissplit}

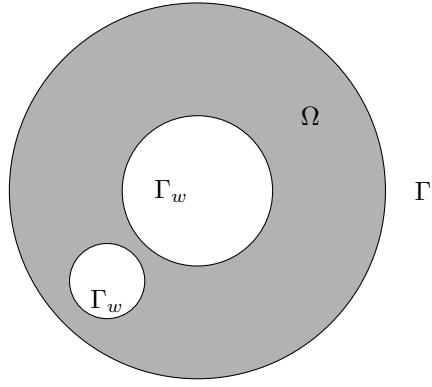
\begin{figure}
\label{fig:domain}
  \begin{center}
    \begin{tikzpicture}
      \draw[fill=white!70!black]  circle[radius = 2.5];
      \node at (3,0) {$\Gamma$};
      \draw[fill=white] circle[radius = 1] node[anchor = east] {$\Gamma_w$};
      \draw[fill=white] (-1.2,-1.2) circle[radius = 0.5] node[anchor = north] {$\Gamma_w$};
      \node at (1.5,1) {$\Omega$};
    \end{tikzpicture}    
  \end{center}
\caption{Geometry of the domain. The domain $\Omega$ is a bounded Lipschitz domain, where a finite number of strictly contained subdomains are removed. The ``exterior'' boundary is $\Gamma$, whereas the ``interior'' is $\Gamma_w$.}
\end{figure}

We now provide an analysis of the splitting strategy. In order to do so, we will operate under the following assumptions:
\begin{enumerate}[$\bullet$]
  \item \emph{Geometry.} The domain $\Omega$ is constructed as follows: Let $D \subset \R^d$ be a bounded, nonempty, Lipschitz domain. Let  $N \in \mathbb{N}$, and for $i=1, \ldots, N$,  $\calO_i \subset \R^d$ be a bounded, nonempty, Lipschitz domain. We assume that, for $i\neq j$, $\dist(\calO_i,\calO_j)>0$, and that $\cup_{i=1}^N\calO_i \Subset D$. Then $\Omega = D \setminus \overline{\cup_{i=1}^N\calO_i}$ with $\Gamma = \partial D$, and $\Gamma_w = \partial \cup_{i=1}^N\calO_i$; see Figure~\ref{fig:domain}. Essentially we are working on annuli--type domains and $\Gamma_w$ is taken to be the inside boundary. 
  
  \item \emph{Data regularity.} We assume that the problem data satisfies the following conditions.
  \begin{enumerate}[$\circ$]
    \item \emph{Permeability.} The parameters $\alpha_0$ and $\gamma$ are positive constants.
    
    \item \emph{Volume forcing.} The volume forcing term $\bef$ satisfies
    \[
      \bef \in \bL^t(\Omega), \qquad \DIV \bef \in L^{2+\delta}(\Omega), \qquad \bef\cdot\bn|_{\Gamma} \in L^{m-1}(\Gamma),
    \]
    with $t > d$; $\delta = 0$ if $d\leq 3$, and $\delta>0$ if $d=4$; and $m>d$.
    
    \item \emph{Boundary forcing.} The boundary forcing term $g$ satisfies
    \[
      g \in H^{1/2}_{00}(\Gamma)' \cap L^{m-1}(\Gamma),
    \]
    with $m>d$.
  \end{enumerate}
  
  \item \emph{Sign conditions.} We assume that the volume and boundary forcing terms satisfy the following sign conditions.
  \begin{enumerate}[$\circ$]
    \item \emph{Volume forcing.} The function $\bef$ satisfies
    \[
      \DIV \bef \leq 0, \text{ in } \Omega, \qquad \bef\cdot\bn \geq 0, \text{ on } \Gamma.
    \]

    \item \emph{Boundary forcing.} The boundary forcing term $g$ satisfies
    \[
      g \geq 0, \text{ on } \Gamma.
    \]
  \end{enumerate}
  
\end{enumerate}

We remark that the geometry assumption guarantees that the pieces of the boundary $\Gamma$ and $\Gamma_w$ are well separated. In other words, there exists $\epsilon_{0}>0$ such that for any $\epsilon \in (0, \epsilon_{0})$ we have $(\Gamma + \epsilon)\cap \Gamma_w = \emptyset$ and the domain $\Omega \setminus \overline{\Gamma + \epsilon}$ is Lipschitz. Here $(\Gamma + \epsilon) =  \cup_{x\in \Gamma} B(x, \epsilon)$.  

We now introduce a subspace of functions in $H^{1}(\Omega)$ whose trace vanish on $\Gamma_{w}$:  
\[
  H^{1}_{w}(\Omega) = \left\{v\in H^{1}(\Omega): \  v|_{\Gamma_w} = 0 \right\}. 
\]
Our notion of a solution to \eqref{eq:NLDarcyStrong} is defined via the splitting strategy as follows. We immediately comment that, owing to the regularity conditions on $\bef$, the definition we give below makes sense.

\begin{definition}[solution]
\label{def:defofsol}
We say that the triple $(q,\bU,P) \in H^1(\Omega) \times \bL^2(\Omega) \times H^1_w(\Omega)$ is a solution to \eqref{eq:NLDarcyStrong} if $q-1 \in H^1_w(\Omega)$, 
\begin{equation}
\label{eq:convdiffweak}
  \int_\Omega \left( \GRAD q \GRAD \phi  - \gamma \bef \cdot \GRAD q \phi - \gamma q \DIV\bef \phi \right) \diff x + \gamma\int_\Gamma q \bef\cdot \bn \phi  \diff \sigma = \alpha_0 \gamma \langle g, \phi \rangle_\Gamma, 
\end{equation}
for all $\phi \in H^1_w(\Omega),$ and, with $\tilde \alpha$ defined in \eqref{eq:alphatilde},
\begin{equation}
\label{eq:linDarcy}
  \begin{dcases}
    \int_\Omega \left( \tilde \alpha \bU + \GRAD P \right) \cdot \bv \diff x = \int_\Omega \bef \cdot \bv \diff x, & \forall \bv \in\bL^2(\Omega), \\
    \int_\Omega \bU \cdot \GRAD r \diff x = \langle g, r \rangle_\Gamma, & \forall r \in H^1_w(\Omega).
  \end{dcases}
\end{equation}
\end{definition}

Let us now proceed to show that this formulation, under our imposed assumptions on the data, is well posed.  We remark that the notion of solution given in Definition \ref{def:defofsol} assumes that for given a solution $q$ of \eqref{eq:convdiffweak}, the function $\tilde \alpha$ is well defined almost everywhere in $\Omega$ and can be suitably used to solve the linear Darcy equation \eqref{eq:linDarcy}.  It is known that $\tilde \alpha$ being a bounded and strictly positive function is sufficient to demonstrate that  \eqref{eq:linDarcy} is uniquely  solvable, corresponding to appropriate data, see \cite[Theorem 2.34]{Guermond-Ern}. As a consequence, the conditions we impose on the data must ensure that not only we have a unique solution to \eqref{eq:convdiffweak} but that it also gives a $\tilde \alpha$ that is bounded and strictly positive. 

 We begin by showing existence and uniqueness of solutions for subproblem \eqref{eq:convdiffweak}. To simplify notation, we define the bilinear form
\[
  \calA_{\bef }(q,\phi) = \int_\Omega \left( \GRAD q \cdot \GRAD \phi  - \gamma \bef \cdot \GRAD q \phi - \gamma q \DIV\bef \phi \right) \diff x + \gamma\int_\Gamma q \bef\cdot \bn \phi \diff \sigma
\]
The subproblem \eqref{eq:convdiffweak} can now be rephrased, after the change of variables $z=q-1$, as: given $\bef$, and $g$ satisfying our conditions, find $z \in H^1_w(\Omega)$ such that 
\begin{equation}\label{eq:forz-weak}
  \calA_{\bef }(z,\phi) = \gamma \int_{\Omega} \DIV \bef \phi \diff x+ \gamma \langle \alpha_0 g- \bef\cdot \bn, \phi \rangle_\Gamma, \quad \forall \phi \in H^1_w(\Omega). 
\end{equation}
Well posedness of this problem is established in \cite{MR2769053} via Lax-Milgram theorem under a smallness assumption on the gradient of $\bef$. The following proposition establishes well posedness for other classes of the data $\bef$, namely those that satisfy the  regularity and sign conditions stated at the beginning of this subsection.

\begin{proposition}[boundedness and coercivity]
\label{prop:coercive}
Let $g\in H^{1/2}_{00}(\Gamma)'$, and $\bef$ satisfy our regularity and sign conditions. Then problem \eqref{eq:convdiffweak} has a unique solution.
\end{proposition}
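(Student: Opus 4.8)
The plan is to verify the hypotheses of the Lax--Milgram theorem for the bilinear form $\calA_{\bef}$ on the Hilbert space $H^1_w(\Omega)$, working with the shifted problem \eqref{eq:forz-weak}. Since $\Gamma_w$ has positive $(d-1)$--dimensional measure, the Poincar\'e--Friedrichs inequality holds on $H^1_w(\Omega)$, so $\|\GRAD \cdot\|_{L^2}$ is an equivalent norm there; I will measure coercivity against this seminorm. Three things must be checked: boundedness of $\calA_{\bef}$, coercivity of $\calA_{\bef}$, and continuity of the load functional.

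For boundedness I would estimate $\calA_{\bef}(q,\phi)$ term by term with H\"older's inequality, absorbing the resulting integrability into $H^1$ through the Sobolev and trace embeddings recalled in Section~\ref{sec:prelim}. The diffusion term is immediate. For the convection term $\gamma\int_\Omega (\bef\cdot\GRAD q)\phi$, splitting the exponents as $\tfrac1t+\tfrac12+\tfrac1s=1$ gives $s=\tfrac{2t}{t-2}<2^\ast$ precisely because $t>d$, so $\phi\in L^s(\Omega)$ by embedding. For the reaction term I would use $\DIV\bef\in L^{2+\delta}(\Omega)$ together with $q,\phi\in L^4(\Omega)$ (recall $2^\ast\ge4$ for $d\le4$). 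For the boundary term I would use a trace embedding $H^{1/2}(\Gamma)\hookrightarrow L^{c}(\Gamma)$ paired against $\bef\cdot\bn\in L^{m-1}(\Gamma)$; the condition $m>d$ is exactly what makes the conjugate exponents admissible. The same estimates show that $\phi\mapsto \gamma\int_\Omega\DIV\bef\,\phi+\gamma\langle\alpha_0 g-\bef\cdot\bn,\phi\rangle_\Gamma$ is continuous on $H^1_w(\Omega)$; here I would additionally use that, because $\Gamma$ and $\Gamma_w$ are separated (the geometry assumption), the trace on $\Gamma$ of any $\phi\in H^1_w(\Omega)$ lies in $H^{1/2}_{00}(\Gamma)$, so the pairing with $g\in H^{1/2}_{00}(\Gamma)'$ is well defined.

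The heart of the argument is coercivity. Testing with $\phi=z$, the only term without a definite sign is the convection term, which I would rewrite using $(\bef\cdot\GRAD z)z=\tfrac12\bef\cdot\GRAD(z^2)$ followed by an integration by parts. Because $z$ vanishes on $\Gamma_w$, this produces
\[
  \calA_{\bef}(z,z)=\int_\Omega|\GRAD z|^2\diff x-\frac{\gamma}{2}\int_\Omega \DIV\bef\,z^2\diff x+\frac{\gamma}{2}\int_\Gamma (\bef\cdot\bn)\,z^2\diff\sigma,
\]
where the volume contribution with factor $\tfrac{\gamma}{2}$ coming from the integration by parts combines with the original reaction term, and the boundary contribution combines with the original boundary term. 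The two sign conditions $\DIV\bef\le0$ in $\Omega$ and $\bef\cdot\bn\ge0$ on $\Gamma$ now make both correction terms nonnegative, leaving $\calA_{\bef}(z,z)\ge\|\GRAD z\|_{L^2}^2$, which is coercive on $H^1_w(\Omega)$ by Poincar\'e--Friedrichs.

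I expect the main obstacle to be justifying the integration by parts at the stated regularity, since $z^2$ need not lie in $H^1(\Omega)$ and so the $\bH(\DIV,\Omega)$ duality formula of Section~\ref{sec:prelim} does not apply verbatim to the pair $(\bef,z^2)$. I would resolve this by a density argument: approximate $z$ by functions in $C^\infty(\overline\Omega)$ vanishing near $\Gamma_w$, write the identity classically, and pass to the limit. Each passage is controlled by the same H\"older--Sobolev bookkeeping as above: $\GRAD(z_n^2)=2z_n\GRAD z_n$ converges in the space dual to $L^t(\Omega)$ because $t>d$, the volume term converges using $\DIV\bef\in L^{2+\delta}(\Omega)$ and $z_n^2\to z^2$ in $L^{2^\ast/2}(\Omega)$, and the boundary term converges using $\bef\cdot\bn\in L^{m-1}(\Gamma)$ with the trace embedding. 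With boundedness, coercivity, and continuity of the load in hand, Lax--Milgram yields a unique $z\in H^1_w(\Omega)$, hence a unique solution $q=z+1$ of \eqref{eq:convdiffweak}.
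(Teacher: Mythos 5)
Your proposal is correct and follows essentially the same route as the paper: Lax--Milgram on $H^1_w(\Omega)$ for the shifted problem \eqref{eq:forz-weak}, boundedness by the same H\"older--Sobolev--trace bookkeeping, and coercivity by rewriting $(\bef\cdot\GRAD z)z=\tfrac12\bef\cdot\GRAD(z^2)$, integrating by parts, and invoking the sign conditions to obtain $\calA_\bef(z,z)\ge\|\GRAD z\|_{\bL^2}^2$. Your closing density argument justifying the integration by parts at the stated regularity is a detail the paper passes over silently, but it does not change the approach.
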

\begin{proof}
We will show that the equivalent problem \eqref{eq:forz-weak} is well posed. Since $\bef \in \bH(\DIV,\Omega)$, and  $g$ is in  $H^{1/2}_{00}(\Gamma)'$, the map 
\[
  \phi\mapsto \gamma \int_{\Omega} \DIV \bef \phi \diff x+ \gamma \langle \alpha_0 g- \bef\cdot \bn, \phi \rangle_\Gamma,
\]
defines a bounded linear functional on $H^1_w(\Omega).$ This follows from the trace and embedding estimates discussed in Section \ref{sec:prelim}.  Thus, we only need to show that, under the given assumptions on $\bef$, the bilinear form $ \calA_{\bef }$, is bounded and coercive. Boundedness follows immediately from the regularity assumptions on $\bef$ and its divergence. Indeed, since $d\leq 4$, we have that $2^\ast \geq 4$. Consequently, Sobolev embedding shows that
\[
  \left| \gamma \int_\Omega q \DIV \bef \phi \diff x \right| \leq C \| \DIV \bef \|_{L^2} \| q \|_{L^4}\| \phi \|_{L^4} \leq C \| \DIV \bef \|_{L^2} \| \GRAD q \|_{\bL^2}\| \GRAD \phi \|_{\bL^2} .
\]
Moreover, we have that
\[
  \left| \gamma \int_\Omega \bef \cdot \GRAD q \phi \diff x \right| \leq C \| \bef \|_{\bL^t} \| \GRAD q \|_{\bL^2} \| \phi \|_{L^{2^\ast}},
\]
which together with Sobolev embedding shows the desired boundedness.

Let us now show coercivity, set $\phi = z\in H^1_w(\Omega)$ on the left hand side of \eqref{eq:forz-weak} to obtain
\begin{align*}
  \calA_\bef(z,z) &:= \int_\Omega \left (|\GRAD z|^2 - \frac\gamma2 \bef \cdot \GRAD |z|^2 - \gamma \DIV \bef |z|^2 \right)\diff x + \gamma \int_\Gamma \bef\cdot\bn |z|^2 \diff \sigma\\
    &= \int_\Omega \left( |\GRAD z |^2 - \frac\gamma2 \DIV \bef |z|^2 - \frac\gamma2 \DIV( \bef |z|^2 ) \right)\diff x + \gamma \int_\Gamma \bef\cdot\bn |z|^2 \diff \sigma\\
    &= \int_\Omega \left( |\GRAD z |^2 - \frac\gamma2 \DIV \bef |z|^2 \right)\diff x - \frac\gamma2 \int_\Gamma \bef \cdot \bn |z|^2 \diff \sigma + \gamma \int_\Gamma \bef\cdot\bn |z|^2 \diff \sigma\\
    &= \int_\Omega \left( |\GRAD z |^2 - \frac\gamma2 \DIV \bef |z|^2 \right)\diff x + \frac\gamma2 \int_\Gamma \bef \cdot \bn |z|^2 \diff \sigma\geq \int_\Omega |\GRAD z|^2 \diff x,
\end{align*}
where, in the last step, we used the sign condition on the divergence of $\bef$ and its normal trace on $\Gamma$, and that completes the proof. 
\end{proof}

We remark that after integration by parts on the left hand side of \eqref{eq:forz-weak}, we have that 
\[
  \int_\Omega \left( \GRAD z+\gamma \bef z \right)\cdot\GRAD \phi \diff x = \gamma \int_{\Omega} \DIV \bef \phi \diff x  +  \gamma \langle \alpha_0 g - \bef \cdot \bn, \phi \rangle_\Gamma, \quad \forall \phi \in H^1_w(\Omega).
\]
It then follows that $z$ is a weak solution of the mixed boundary value problem
\begin{equation}
\label{eq:forz}
  \begin{dcases}
    -\DIV (\GRAD z + \gamma \bef z) = \gamma \DIV \bef & \text{in } \Omega, \\
    z=0 & \text{on } \Gamma_w,\\
    (\GRAD z + \gamma \bef z)\cdot\bn =  \gamma(\alpha_0 g - \bef \cdot \bn)&  \text{on } \Gamma.
  \end{dcases}
\end{equation}
Having shown that this problem always has a unique solution, we will make sure that \eqref{eq:alphatilde} defines a suitable coefficient for \eqref{eq:linDarcy} to make sense. The goal is to exploit the sign condition on $g$ to guarantee that the solution $q$ is bounded from below by a positive number.  We begin by proving a regularity result for $q$. 

\begin{proposition}[regularity]
\label{prop:regularity}
Suppose that $\Omega$ satisfies our geometry assumptions, $\bef$ and $g$ satisfy our regularity assumptions, and $\bef$ satisfies our sign conditions. Then, there exists $\nu\in (0, 1)$ such that $q \in H^1(\Omega)$, the unique solution of \eqref{eq:convdiffweak} belongs to $C^{0, \nu}(\bar{\Omega})$. 
\end{proposition}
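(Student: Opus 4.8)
The plan is to reduce the statement to a standard De Giorgi--Nash--Moser Hölder estimate, in two stages: first an $L^\infty$ bound, then a Hölder estimate obtained by treating the convection term as a divergence--form source. I would begin by recasting the problem. Since $q = z+1$, the weak formulation \eqref{eq:convdiffweak} (equivalently the mixed boundary value problem \eqref{eq:forz}) says precisely that $q$ is a weak solution of $-\Delta q = \gamma\,\DIV(q\,\bef)$ in $\Omega$, subject to $q=1$ on $\Gamma_w$ and the conormal condition $\GRAD q\cdot\bn + \gamma q\,\bef\cdot\bn = \alpha_0\gamma g$ on $\Gamma$. This is a linear, uniformly elliptic, divergence--form problem with drift $\gamma\bef\in\bL^t(\Omega)$, $t>d$, a zeroth--order coefficient $\gamma\DIV\bef\in L^{2+\delta}(\Omega)$, mixed Dirichlet/conormal boundary conditions, and conormal boundary datum in $L^{m-1}(\Gamma)$ with $m>d$.

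The structural feature I would exploit is the geometry assumption: because $\Gamma$ and $\Gamma_w$ lie at positive distance, the usual obstruction to regularity for mixed boundary value problems---the loss of regularity where the Dirichlet and conormal parts meet---simply does not occur. I would therefore fix a finite partition of unity subordinate to a cover of $\bar\Omega$ by balls that are each either interior, centered on $\Gamma_w$, or centered on $\Gamma$, and argue locally. On each piece the localized equation is a purely interior problem, a purely Dirichlet problem, or a purely conormal problem, so the classical scalar theory applies without any interface difficulty, and the local estimates can be patched into a global one.

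Step one is the $L^\infty$ bound. I would run a Stampacchia truncation (De Giorgi iteration) on the weak form, testing with truncations $(q-k)_+$ and their negative counterparts. The integrability hypotheses are exactly what the iteration consumes: $\DIV\bef\in L^{2+\delta}$ with $2+\delta>d/2$ controls the zeroth--order term, $\bef\in\bL^t$ with $t>d$ renders the drift subcritical and absorbable, and $g,\bef\cdot\bn\in L^{m-1}(\Gamma)$ with $m-1>d-1$ controls the conormal boundary contribution; the $q$--dependence of the source is handled by the self--improving smallness of the superlevel sets. This yields $q\in L^\infty(\Omega)$. Step two: with $q$ now bounded, the source $\gamma q\,\bef$ lies in $\bL^t(\Omega)$ with $t>d$, so $q$ solves $-\Delta q = \DIV(\gamma q\,\bef)$ with a genuinely divergence--form right--hand side of subcritical integrability. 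Applying the De Giorgi--Nash--Moser Hölder estimates---interior estimates away from $\partial\Omega$, Dirichlet boundary estimates near $\Gamma_w$, and conormal boundary estimates near $\Gamma$, using the Lipschitz charts to flatten the boundary and the $L^{m-1}$ datum---and then patching via the partition of unity gives $q\in C^{0,\nu}(\bar\Omega)$ with $\nu\in(0,1)$ determined by $d$, $t$, and $m$ (morally $\nu\le 1-d/t$).

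The main obstacle I anticipate is the conormal boundary estimate on $\Gamma$: one must establish Hölder continuity up to a merely Lipschitz boundary for the conormal condition that carries both the drift $\gamma q\,\bef\cdot\bn$ and the rough datum $g\in L^{m-1}(\Gamma)$. The interior and Dirichlet pieces are textbook, whereas the conormal case with low boundary integrability is the delicate point; it is precisely the separation of $\Gamma$ from $\Gamma_w$ that makes it tractable, since it permits localizing to a single boundary condition at a time.
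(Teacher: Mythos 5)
Your proposal follows essentially the same route as the paper: exploit the positive distance between $\Gamma$ and $\Gamma_w$ to localize, then combine interior De Giorgi--Nash--Moser H\"older estimates, global Dirichlet-boundary estimates near $\Gamma_w$, and conormal (Robin) boundary estimates near $\Gamma$. The single ingredient you flag as the main obstacle --- H\"older continuity up to a merely Lipschitz boundary under the Robin condition with $L^{m-1}$ boundary data --- is precisely what the paper supplies by citing Nittka's results, so your outline matches the paper's proof once that reference is invoked.
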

\begin{proof}
For $q \in H^1(\Omega)$, the solution to \eqref{eq:convdiffweak}, the function $z=q-1\in H^{1}_{w}(\Omega)$ solves the mixed boundary value problem \eqref{eq:forz}. It follows from local H\"older regularity of solutions of elliptic equations; see, for example, \cite[Theorem 8.24]{MR737190}, that the regularity conditions on $\bef$, guarantee the existence of $\nu\in (0, 1)$ such that $z\in C^{0, \nu}_{loc}(\Omega), $ with the estimate that for any $\Omega'\Subset\Omega$, then there exists a constant $C = C(\Omega')$ such that 
\[
\|z\|_{C^{0, \nu}(\Omega')} \leq C\left(\|z\|_{L^{2}} + \|\bef\|_{\bL^{t} }\right). 
\]
In particular, since for any $\epsilon \in (0, \epsilon_0)$, $\partial{(\Gamma + \epsilon)}\cap\Omega$ is in the interior of $\Omega, $ $z$ is H\"older continuous on $\partial{(\Gamma + \epsilon)}\cap\Omega$. 
As a consequence, since $z=0$ on $\Gamma_w$,  we may apply the global H\"older  regularity of solutions; see \cite[Theorem 8.29]{MR737190} and the discussion following this result, to conclude that for any $\epsilon \in (0, \epsilon_0)$, there exists $\nu\in (0, 1)$ (possibly different from the previous $\nu$) such that $z\in C^{ 0, \nu}(\overline{\Omega \setminus (\Gamma + \epsilon)})$. 

Next we study the regularity of $z$ near $\Gamma$ where a Robin--type boundary condition is imposed. H\"older regularity estimates for elliptic equations over Lipschitz domains with Neumann, and more generally with Robin, boundary conditions are obtained in the work of  Nittka \cite{NITTKA2011860,NittkaRobin2013Qeap}. Applying \cite[Theorem 3.7]{NittkaRobin2013Qeap} or \cite[Proposition 3.6]{NITTKA2011860}, we obtain that we can choose $\epsilon_0 >0$ such that a solution $z$ to \eqref{eq:forz} is in $C^{0, \nu}((\Gamma + \epsilon_0)\cap \Omega )$ for some $\nu\in (0, 1)$ and for any $\epsilon \in (0, \epsilon_0)$ with the estimate 
\[
\|z\|_{C^{0, \nu}((\Gamma + \epsilon) \cap \Omega)} \leq C\left(\|z\|_{L^{2}} + \|\bef\|_{\bL^{t}} +  \|g\|_{L^{m-1}} + \|\bef\cdot\bn\|_{L^{m-1}}  \right).
\]

Combining the above estimates we obtain that $z\in C^{0, \nu}(\bar\Omega)$ with the estimate that 
 \[
 \|z\|_{C^{0, \nu}(\bar\Omega)} \leq C\left(\|z\|_{L^{2}} + \|\bef\|_{\bL^{t}} +  \|g\|_{L^{m-1}} + \|\bef\cdot\bn\|_{L^{m-1}} \right),
 \]
which is what we needed to show.
\end{proof}

With the regularity result of Proposition \ref{prop:regularity} at hand, we have that, in particular, $q$ is continuous on $\bar{\Omega}$ and so it is bounded. We must, additionally, show that this function is strictly positive in $\bar\Omega$, to be able to conclude that $\tilde \alpha$, defined in \eqref{eq:alphatilde}, is an admissible coefficient. This is the content of the following result.

\begin{theorem}[positivity]
\label{thm:qpositive}
In the setting of Proposition~\ref{prop:regularity} assume, in addition, that $g = 0$ on $\Gamma$. Then, we have that the unique solution to problem \eqref{eq:convdiffweak} satisfies 
\[
  q(x) \geq q_0, \quad \forall x \in \bar\Omega
\]
for some $q_0>0$. 
As a consequence $\tilde \alpha \in C(\bar\Omega)$ and there is $\alpha_0>0$ such that $\tilde \alpha(x) \geq \alpha_0$ for all $x \in \bar\Omega$.
\end{theorem}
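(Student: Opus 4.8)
The plan is to establish the pointwise lower bound $q(x) \geq q_0 > 0$ on $\bar\Omega$. Since Proposition~\ref{prop:regularity} gives $q \in C^{0,\nu}(\bar\Omega)$, the function $q$ attains its extrema on the compact set $\bar\Omega$, so once strict positivity is known pointwise, $q_0 := \min_{\bar\Omega} q$ is automatically a positive constant, and $\tilde\alpha = \alpha_0/q$ is continuous and bounded above and below by positive numbers. The precise lower bound $\tilde\alpha \geq \alpha_0$ in turn needs the complementary estimate $q \leq 1$. I would argue in three steps: nonnegativity of $q$ by an energy (weak maximum principle) argument, strict positivity in the interior by the strong maximum principle, and exclusion of a boundary zero on $\Gamma$ by the Robin condition together with a boundary point lemma.

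For Step~1 (nonnegativity), I would test \eqref{eq:convdiffweak} (with $g=0$, so that $\calA_\bef(q,\phi)=0$ for all $\phi\in H^1_w(\Omega)$) with $\phi=q^{-}:=\max(-q,0)$, which lies in $H^1_w(\Omega)$ because $q=1$ on $\Gamma_w$. Using the bilinearity identity $\calA_\bef(q,q^{-})=-\calA_\bef(q^{-},q^{-})$ (the cross term $\calA_\bef(q^{+},q^{-})$ vanishes by disjoint supports) and repeating the integration by parts from the coercivity computation of Proposition~\ref{prop:coercive} yields
\[
  \int_\Omega |\GRAD q^{-}|^2 \diff x - \frac\gamma2 \int_\Omega \DIV\bef\,(q^{-})^2 \diff x + \frac\gamma2 \int_\Gamma \bef\cdot\bn\,(q^{-})^2 \diff\sigma = 0 .
\]
By the sign conditions $\DIV\bef \leq 0$ and $\bef\cdot\bn \geq 0$ the last two integrals are nonnegative, so all three terms vanish; in particular $\GRAD q^{-}=0$, and since $\Omega$ is connected and $q^{-}=0$ on $\Gamma_w$ this forces $q^{-}\equiv 0$, i.e.\ $q\geq 0$. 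The same argument applied to $(q-1)^{+}\in H^1_w(\Omega)$, invoking the coercivity of $\calA_\bef$ directly, gives $q\leq 1$.

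For Step~2 (interior positivity), I would write the equation as $-\Delta q - \gamma\bef\cdot\GRAD q + (-\gamma\DIV\bef)\,q = 0$, whose zeroth order coefficient $-\gamma\DIV\bef$ is nonnegative, whose drift $-\gamma\bef$ lies in $\bL^t(\Omega)$ with $t>d$, and whose potential lies in $L^{2+\delta}(\Omega)$ with $2+\delta>d/2$ (this is exactly why $\delta>0$ is required when $d=4$). These are precisely the integrability hypotheses under which the Harnack inequality for nonnegative weak solutions and the strong maximum principle hold (\cf \cite{MR737190}), so a nonnegative solution vanishing at an interior point is identically zero; as $q=1$ on $\Gamma_w$ we have $q\not\equiv 0$, whence $q>0$ in $\Omega$. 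The minimum of $q$ over $\bar\Omega$ thus cannot be attained in $\Omega$, nor on $\Gamma_w$ where $q=1$, so if it were zero it would be attained at some $x_0\in\Gamma$. For Step~3 I would argue by contradiction: if $q(x_0)=0$ then $-q$ has a nonnegative maximum $0$ at $x_0$ while $q>0$ inside, so Hopf's boundary point lemma gives $\partial_n q(x_0)<0$; but the Robin condition with $g=0$ reads $\partial_n q + \gamma q\,\bef\cdot\bn = 0$, hence $\partial_n q(x_0) = -\gamma q(x_0)\,\bef\cdot\bn(x_0) = 0$, a contradiction. Therefore $q>0$ on $\bar\Omega$, and together with $q\leq 1$ this yields $\tilde\alpha=\alpha_0/q\in C(\bar\Omega)$ with $\tilde\alpha\geq\alpha_0$.

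I expect the main obstacle to be the rigorous justification of Step~3 on a merely Lipschitz domain: Hopf's lemma requires an interior ball, or at least an adequate barrier, at $x_0\in\Gamma$, which a general Lipschitz boundary need not provide at every point, and the outward normal derivative need not exist classically for a $C^{0,\nu}$ solution. I would close this gap by localizing where $\Gamma=\partial D$ is a Lipschitz graph and combining the boundary Hölder regularity already established via the Nittka estimates in Proposition~\ref{prop:regularity} with a boundary weak Harnack inequality for the Robin problem—equivalently, a barrier argument adapted to the uniform interior cone condition that Lipschitz domains satisfy—so as to reproduce the strict-sign conclusion of the boundary point lemma in this weak setting.
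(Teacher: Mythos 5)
Your proposal is essentially correct and reaches the paper's conclusion, but it differs from the paper's proof in an interesting way in Step~1 and stops just short of the paper's key citation in Step~3. For nonnegativity, the paper runs a level-set argument in the style of Chicco--Stampacchia: it sets $q_k=\min\{q-k,0\}$, defines $k_0=\sup\{k: q_k\equiv 0\}$, shows $|\Omega(k)|\to 0$ as $k\to k_0$ via a weak minimum principle, and derives a contradiction from $\calA_\bef(q_k,q_k)\le 0$ together with coercivity. Your direct test with $\phi=q^{-}=\max(-q,0)\in H^1_w(\Omega)$ is the $k=0$ instance of the same mechanism and is both valid and cleaner: the cross term $\calA_\bef(q^+,q^-)$ does vanish, the coercivity identity from Proposition~\ref{prop:coercive} applies verbatim to $q^-$, and $\GRAD q^-=0$ with $q^-|_{\Gamma_w}=0$ on the connected domain $\Omega$ forces $q^-\equiv 0$; your companion bound $q\le 1$ via $(q-1)^+$ is a nice addition that makes the literal inequality $\tilde\alpha\ge\alpha_0$ true (the paper does not prove $q\le 1$ and implicitly means ``some positive constant''). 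Your Step~2 (interior strict positivity via the strong maximum principle/weak Harnack inequality, with the correct bookkeeping of why $\bef\in\bL^t$, $t>d$, and $\DIV\bef\in L^{2+\delta}$, $\delta>0$ for $d=4$, are needed) matches the paper's use of Stampacchia's weak minimum principle. The one place where your argument is not self-contained is Step~3: as you yourself observe, Hopf's boundary point lemma is not available on a merely Lipschitz $\Gamma$ (this is exactly the content of Remark~\ref{rem:otherConditionsforpositivity} and the Castro counterexamples), so the Robin-condition contradiction via $\partial_n q(x_0)$ cannot be used as stated. Your proposed repair---a boundary Harnack/weak Harnack inequality for the homogeneous conormal (Robin) problem near $x_0\in\Gamma$---is precisely what the paper does, invoking \cite[Theorem 3.1]{Chicco2005} to get $0<\max_{\Omega\cap B(x_0,\rho)}q\le C\min_{\Omega\cap B(x_0,\rho)}q=0$, a contradiction; so to complete your proof you need only replace the Hopf argument by that citation rather than construct a barrier yourself.
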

\begin{proof}
We prove this in two steps. First, we show that the solution $q$ is nonnegative and then applying a separate argument we show that $q$ is strictly positive. 

\noindent {\bf Step 1:} In this step we show that $q\geq 0$, using the argument developed by  Chicco in \cite{ChiccoMaurizio1970Pdmg}, adjusted to our setting, and which in turn uses Stampacchia's truncation method \cite{STAMPACCHIA}.  For $k\in \mathbb{R}, $ we begin by defining  
$
q_k(x) = \min\{q(x)-k, 0\}, 
$ and 
\begin{align*}
  \Omega(k) &= \{x\in \Omega: q_k <0\}\\
  k_0 &= \sup\{k: q_{k} \equiv 0 \ \text{in } \Omega\}.
\end{align*}
We aim to show that $q_0 \geq 0$. For that it suffices to show that $k_0 \geq 0$. We argue by contradiction and assume that  $k_0 <0$. 
We prove first that 
\begin{equation}\label{pos-omega}
\lim_{k\to k_0} |\Omega(k)| = 0. 
\end{equation}
To do so, observe that from the  definition of the set $\Omega(k)$,  as $\epsilon \downarrow 0$ we have, for every $x \in \Omega$,
$
\chi_{\Omega(k_0 -\epsilon)}(x)\to \chi_{\Omega(k_0)}(x) 
$.
As a consequence, $|\Omega(k_0)| = \lim_{\epsilon \downarrow 0}|\Omega(k_0 -\epsilon)| = 0$. In addition, one can easily show that 
as $\epsilon \downarrow 0$, 
\[
\chi_{\Omega(k_0 +\epsilon)}(x)\to \chi_{P}(x)   \quad \forall x\in \Omega,
\]
where  $P = \Omega(k_0) \cup \{ x\in \Omega:  q=k_0\}$.  In this case, we have that 
\[
\lim_{\epsilon \downarrow 0} |\Omega(k_0 +\epsilon)| =  | \{x\in \Omega: q(x)=k_0\}|. 
\]
Assume that this limit is positive. Owing to the fact that $|\Omega(k_0)|=0$ we have $k_0 - q(x) \leq 0$ for every $x \in \Omega$. Moreover, using that $q \in H^1(\Omega)$ solves \eqref{eq:convdiffweak} and $k_0$ is a constant,
\[
  \calA_\bef(k_0 - q, \phi ) = - \gamma k_0 \int_\Omega \DIV \bef \phi \diff x \leq 0, \quad \forall \phi \in H_0^1(\Omega), \ \phi \geq 0.
\]
Where, to obtain the inequality, we used the sign condition on the divergence of $\bef$. The weak minimum principle of \cite[Corollary 1]{ChiccoMaurizio1970Pdmg} then implies that, either $q(x) = k_0$ in $\Omega$ or $q(x) > k_0$ almost everywhere in $\Omega$. However, if $q(x) = k_0 < 0$ in $\Omega$, we arrive at a contradiction, as $q \in C(\bar\Omega)$ is positive (in fact $q \equiv 1$) on $\Gamma_w$. On the other hand, if $q(x)>k_0$ almost everywhere, then $| \{x\in \Omega: q(x)=k_0\}| = 0$, which is again a contradiction. In conclusion, $\lim_{\epsilon \downarrow 0} |\Omega(k_0 +\epsilon)| =0$ and \eqref{pos-omega} holds.

We now use \eqref{pos-omega} to deduce that for every $\eta>0$, there exists $ k_1 \in (k_0,0)$  such that $0<|\Omega(k_1)| < \eta$ and for all $k \leq k_1$
\[
q_k(x) = 0 \quad \forall x \in \Omega\setminus \Omega(k).   
\]
Note that for such  $k$, we have $q_k \leq 0$ in $\Omega$. Moreover, since $q=1$ on $\Gamma_w$ and $q$ is continuous, there is an open set $\calO \subset \Omega$, such that $\Gamma_w \subset \partial\calO \cap \partial \Omega$, where we have
\[
  q(x) \geq \frac12, \quad \forall x\in \bar\calO, 
\]
and therefore $ q(x)-k \geq \frac12-k >\frac12$ for all $x\in \bar\calO$. Consequently, $q_k = 0$ on $\bar\calO$ and this implies that $q_k \in H^{1}_w(\Omega)$, which makes it a suitable test function in \eqref{eq:convdiffweak}. Therefore,
\begin{align*}
  \calA_\bef(q_k, q_k) = \calA_\bef(q-k, q_k) &= \alpha_0 \gamma \int_{\Gamma} g q_k \diff \sigma - \calA_\bef(k, q_k) \\
&= \int_{\Gamma} \gamma (\alpha_0 g - k\bef\cdot \bn) q_k \diff \sigma + \gamma k \int _{\Omega}\DIV\bef q_k \diff x  \leq 0,
\end{align*}
where we also used the sign conditions on the data. Now, the coercivity of $\calA_\bef$, proved in Proposition~\ref{prop:coercive}, implies that
\[
  \| \GRAD q_k \|_{\bL^2(\Omega(k))}^2 = \| \GRAD q_k \|_{\bL^2}^2 \leq \calA_\bef(q_k,q_k) \leq 0, 
\]
so that $q_k \equiv 0$ on $\bar\Omega$ (recall that $q_k \equiv 0$ on $\Gamma_w$). However, we have arrived at a contradiction, since $q_k <0$ in $\Omega(k)$, which has positive measure. In conclusion, we must have that $k_0 \geq 0$ and therefore $q(x) \geq 0$, as we intended to show in this step.

\noindent {\bf Step 2}. Applying the weak minimum principle in \cite[Corollary 8.1]{STAMPACCHIA}, the minimum of $q$, which could possibly be zero, can only be achieved at the boundary $\Gamma$. That is, on any set $\Omega'$ compactly contained in $\Omega$ we must have, $\inf \{q(x) : x\in \Omega'\}  > 0$.

Assume now that $x_0 \in \Gamma$ is such that $0 = q(x_0) = \inf\{ q(x) : x \in \bar\Omega \}$. Since $g = 0$ in $\Gamma$, we can invoke the boundary Harnack inequality of \cite[Theorem 3.1]{Chicco2005} to assert then that, for a sufficiently small $\rho>0$,
\[
  0 \leq \max \left \{q(x): x \in \Omega \cap B(x_0,\rho) \right\} \leq C \min \left \{q(x): x \in \Omega \cap B(x_0,\rho) \right\} = 0,
\]
which is a contradiction. Therefore $q(x) > 0$ in $\bar\Omega$, as we intended to show.
\end{proof}

\begin{remark}[sharpness and positivity under other conditions]
\label{rem:otherConditionsforpositivity}
The condition $g=0$ may seem rather restrictive, as one may expect that $g\geq 0$ may be sufficient. After all, under enough smoothness of the domain $\Omega$ and solution $q$, this is all is needed to conclude the strict positivity, say, via a boundary Hopf lemma; see \cite[Lemma 3.4]{MR737190} or \cite[Section 6.4.2]{MR2597943}. However, the counterexamples of \cite{MR1941780}, which are attributed to A. Castro, show that in the case that the boundary is merely Lipschitz, as it is our case of interest here, positivity may fail at ``corner'' points.

It is possible, nevertheless, to prove strict positivity under other assumptions:
\begin{enumerate}[$\bullet$]
  \item In the case that $\Omega$ is convex, $\Gamma_w = \emptyset$ (which does not satisfy our geometry assumptions), $\bef\cdot\bn \geq f_0 > 0$ and $g\geq 0$; we can invoke the weak Harnack inequality of \cite[Lemma 3.2]{MR1941780} to conclude the strict positivity of $q$. Notice that the proof of Proposition~\ref{prop:coercive} shows that, under the strict positivity assumption on $\bef\cdot\bn$, we still have coercivity of $\calA_\bef$.
  
  \item In the case that $\Omega$, $\bef$ and $\bg$ are sufficiently smooth to guarantee that $q \in W^{2,d}(\Omega)$, and $g\geq0$; we can invoke \cite[Corollary 3.2]{LIEBERMAN2001178} to arrive at the same conclusion. \ermk
\end{enumerate}
\end{remark}

Once we know that $\tilde \alpha$ is bounded and strictly positive, the analysis of \eqref{eq:linDarcy} is standard. We summarize the well posedness of the splitting strategy in the following result. 

\begin{theorem}[existence and uniqueness]
\label{thm:ContinuousDone}
Suppose that the domain $\Omega$ satisfies our geometry assumptions; and the data $\bef$ and $g$ satisfy our regularity, and sign conditions. Under these conditions, problem \eqref{eq:NLDarcyStrong} has a unique solution in the sense of Definition~\ref{def:defofsol}.
\end{theorem}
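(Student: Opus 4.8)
The plan is to treat Theorem~\ref{thm:ContinuousDone} as the assembly of the three preceding results, since Definition~\ref{def:defofsol} decouples the nonlinear problem \eqref{eq:NLDarcyStrong} into a scalar convection--diffusion stage (which produces $q$, and hence the coefficient $\tilde\alpha$) followed by a linear Darcy stage (which produces $(\bU,P)$). First I would dispose of the scalar stage: Proposition~\ref{prop:coercive} already furnishes a unique $q \in H^1(\Omega)$ with $q-1 \in H^1_w(\Omega)$ solving the weak convection--diffusion problem \eqref{eq:convdiffweak}, so existence and uniqueness of $q$ require no further argument and only the admissibility of $\tilde\alpha$ remains to be checked before turning to the Darcy system.

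Next I would certify that the coefficient $\tilde\alpha$ from \eqref{eq:alphatilde} is \emph{admissible}, meaning continuous, bounded above, and bounded below by a positive constant. Proposition~\ref{prop:regularity} gives $q \in C^{0,\nu}(\bar\Omega)$, so $q$ is bounded and $\tilde\alpha = \alpha_0/q$ is bounded below by $\alpha_0/\|q\|_{L^\infty(\Omega)} > 0$. Theorem~\ref{thm:qpositive} supplies the crucial strict lower bound $q \geq q_0 > 0$ on $\bar\Omega$, whence $\tilde\alpha \leq \alpha_0/q_0 < \infty$ and, being the reciprocal of a continuous strictly positive function, $\tilde\alpha \in C(\bar\Omega)$. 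Thus $\tilde\alpha$ satisfies the two--sided bound that qualifies it as a coefficient in \eqref{eq:linDarcy}.

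With $\tilde\alpha$ admissible the linear Darcy stage is classical, and I would simply verify the hypotheses of \cite[Theorem 2.34]{Guermond-Ern}. The bilinear form $(\bU,\bv)\mapsto \int_\Omega \tilde\alpha\, \bU\cdot\bv \diff x$ is $\bL^2$--coercive because $\tilde\alpha$ is bounded below; the mixed coupling through $\GRAD$ and $\DIV$ satisfies the requisite inf--sup condition on $\bL^2(\Omega) \times H^1_w(\Omega)$, which follows from testing with $\bv = \GRAD P$ together with the Poincar\'e inequality valid on $H^1_w(\Omega)$ since the trace vanishes on the positive--measure set $\Gamma_w$; and the right--hand sides of \eqref{eq:linDarcy} are bounded functionals under our data regularity, in particular $g \in H^{1/2}_{00}(\Gamma)'$ renders $\langle g, r\rangle_\Gamma$ meaningful for $r \in H^1_w(\Omega)$. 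This yields a unique pair $(\bU,P) \in \bL^2(\Omega) \times H^1_w(\Omega)$. Uniqueness of the whole triple then follows from the chain of uniqueness statements: $q$ is unique, it fixes $\tilde\alpha$ uniquely, and the linear problem has a unique solution for that coefficient.

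The genuinely hard part is not this assembly but the positivity input it rests on: everything downstream of Theorem~\ref{thm:qpositive} is routine, whereas what keeps $\tilde\alpha$ bounded \emph{above} is precisely the strict lower bound $q \geq q_0$, without which the Darcy bilinear form would lose coercivity and the second stage would collapse. On a merely Lipschitz domain this bound cannot be obtained from a Hopf--type argument and may fail at corners, as Remark~\ref{rem:otherConditionsforpositivity} records; accordingly I would emphasize that the force of the theorem flows through Theorem~\ref{thm:qpositive}, so that the strict positivity—and hence admissibility of $\tilde\alpha$—is guaranteed in the regime where that result (or one of the alternative settings of Remark~\ref{rem:otherConditionsforpositivity}) applies.
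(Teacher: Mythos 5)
Your proposal is correct and follows essentially the same route as the paper: uniqueness of $q$ from Proposition~\ref{prop:coercive}, continuity from Proposition~\ref{prop:regularity}, strict positivity from Theorem~\ref{thm:qpositive} to make $\tilde\alpha$ bounded and strictly positive, and then \cite[Theorem 2.34]{Guermond-Ern} for the linear Darcy stage; the paper's proof is just a terser version of this assembly. (One small slip in your commentary: losing the lower bound $q \geq q_0$ would make $\tilde\alpha$ unbounded above and hence destroy \emph{boundedness} of the Darcy form, not its coercivity — coercivity is what the upper bound on $q$ buys you.)
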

\begin{proof}
Owing to Proposition~\ref{prop:coercive} problem \eqref{eq:convdiffweak} has a unique solution which moreover, via Proposition~\ref{prop:regularity}, is continuous and strictly positive (Theorem~\ref{thm:qpositive}). This implies that $\tilde \alpha$, defined as in \eqref{eq:alphatilde}, is a bounded and strictly positive function. The conditions of \cite[Theorem 2.34]{Guermond-Ern} are now satisfied and this implies that the linear Darcy equation \eqref{eq:linDarcy} has a unique solution.
\end{proof}

\section{Discretization}
\label{sec:discrete}

Having studied the continuous problem, we can proceed with its approximation. In addition to the conditions that guaranteed well posedness of the continuous problem we shall, to avoid unnecessary technicalities, assume that $\Omega$ is a polytope. This guarantees that $\Omega$ can be triangulated exactly. Given a  conforming, and quasiuniform triangulation of $\Omega$ (see \cite{CiarletBook} for a definition of these notions) of size $h>0$, we construct two finite element spaces $W_h \subset H^1(\Omega)$, and $\bX_h \subset \bL^2(\Omega)$. We also define $M_h = W_h \cap H^1_w(\Omega)$.

We assume that the family of pairs $(\bX_h , M_h)$ satisfies a discrete inf--sup condition: There is a constant $\beta>0$ such that for all $h>0$
\begin{equation}
\label{eq:LBB}
  \beta \| \GRAD r_h \|_{\bL^2} \leq \sup_{\bv_h \in \bX_h} \frac{ \int_\Omega \GRAD r_h \cdot \bv_h }{\| \bv_h \|_{\bL^2} }, \quad \forall r_h \in M_h.
\end{equation}
Examples of suitable spaces can be readily found in the literature \cite{CiarletBook,Guermond-Ern,MR851383,MR3097958}. To ensure positivity of discrete approximations to the variable $q$, we require that the Galerkin projection with respect to $\calA_\bef$ onto our space $M_h$ has almost optimal approximation properties in the max norm. In other words, if $w \in H^1(\Omega)$ and $w_h \in W_h$ are such that
\[
  \calA_\bef(w-w_h, \phi_h) = 0, \quad \forall \phi_h \in M_h,
\]
then
\begin{equation}
\label{eq:ThisIsWhereWeCheat}
  \| w - w_h \|_{L^\infty} \leq C |\log h| \inf \left\{ \| w - \phi_h \|_{L^\infty} : \phi_h \in W_h \right\}.
\end{equation}

\begin{remark}[max norm estimates]
The derivation of max norm error estimates for finite element approximations is a, rather technical, and underdeveloped subject. To our knowledge, most of the references that deal with this subject are only concerned with the Dirichlet problem for the Laplacian, and assume at least convexity of the domain; see for instance \cite{MR3470741} for the Laplacian, and \cite{2004.09341} for the Dirichlet problem for more general operators, but under an acuteness assumption on the triangulation. The only reference we are aware of that deals with mixed boundary conditions is \cite{MR3651062}, where convexity is also assumed, and the differential operator is the Laplacian. While we admit that this is a weakness of our analysis, we shall proceed assuming that \eqref{eq:ThisIsWhereWeCheat} holds. Another possible approach to obtain such an estimate is by deriving a discrete maximum principle, as it is detailed, for instance, in \cite[Chapter III, Sections 20, 21]{MR1115237}. This, however, imposes restrictions on the mesh.
\ermk
\end{remark}

We approximate the solution to \eqref{eq:NLDarcyStrong} with the finite element spaces that we have just described. We will say that the triple $(q_h,\bu_h,p_h) \in W_h \times \bX_h \times M_h$ is a finite element approximation of the solution to \eqref{eq:NLDarcyStrong}, in the sense of Definition~\ref{def:defofsol} if:
\begin{enumerate}[$\bullet$]
  \item The function $q_h \in W_h$ is such that $q_h -1 \in M_h$ and
  \begin{equation}
    \label{eq:convdiffdiscrete}
    \calA_\bef(q_h,\phi_h) = \alpha_0 \gamma \langle g, \phi \rangle_\Gamma, \quad \forall \phi_h \in M_h.
  \end{equation}
  
  \item We define
  \begin{equation}
  \label{eq:defofalphah}
    \tilde \alpha_h(x) = \frac{\alpha_0}{q_h(x)}, \quad x \in \bar\Omega.
  \end{equation}
  
  \item The pair $(\bu_h,p_h) \in \bX_h \times M_h$ satisfies
  \begin{equation}
  \label{eq:linDarcydiscrete}
  \begin{aligned}
    \int_\Omega \left( \tilde \alpha_h \bu_h + \GRAD p_h \right) \cdot \bv_h &= \int_\Omega \bef \cdot \bv_h, & \forall \bv \in\bX_h, \\
    \int_\Omega \bu_h \cdot \GRAD r_h &= \langle g, r_h \rangle_\Gamma, & \forall r \in M_h.
  \end{aligned}
\end{equation}
\end{enumerate}

Our main goal now is to show that the discrete problem is well posed and to study its approximation properties.

\subsection{Analysis of the discrete problem}
\label{sub:discrwellposed}

Here we show that, under similar assumptions as for the continuous problem, the discrete problem \eqref{eq:convdiffdiscrete}---\eqref{eq:linDarcydiscrete} is uniformly well posed.

\begin{theorem}[well posedness]
\label{thm:discrWellPosed}
In the setting of Theorem~\ref{thm:ContinuousDone} or Remark~\ref{rem:otherConditionsforpositivity} assume, in addition, that $h$ is sufficiently small. Then problem \eqref{eq:convdiffdiscrete}---\eqref{eq:linDarcydiscrete} has a unique solution.
\end{theorem}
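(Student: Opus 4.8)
The plan is to mirror the three-stage logic used for the continuous problem in Theorem~\ref{thm:ContinuousDone}: first solve the discrete convection diffusion problem \eqref{eq:convdiffdiscrete} for $q_h$, then exploit positivity of $q_h$ to make sense of $\tilde\alpha_h$ in \eqref{eq:defofalphah}, and finally invoke discrete saddle point theory to solve \eqref{eq:linDarcydiscrete}. The existence and uniqueness of $q_h$ is the easy part. After the change of variables $z_h = q_h - 1 \in M_h$, problem \eqref{eq:convdiffdiscrete} becomes a square linear system on the finite--dimensional space $M_h$ governed by $\calA_\bef$. Since $M_h \subset H^1_w(\Omega)$, the coercivity established in Proposition~\ref{prop:coercive} is inherited verbatim on $M_h$, and boundedness is immediate; Lax--Milgram then yields a unique $z_h$, hence a unique $q_h$.

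The crux is the strict positivity of $q_h$, which is what makes \eqref{eq:defofalphah} admissible. Because $M_h \subset H^1_w(\Omega)$, subtracting \eqref{eq:convdiffdiscrete} from \eqref{eq:convdiffweak} gives the Galerkin orthogonality $\calA_\bef(q - q_h, \phi_h) = 0$ for all $\phi_h \in M_h$, so $q_h$ is precisely the $\calA_\bef$--Galerkin approximation of $q$ onto $W_h$. I would then apply the max norm estimate \eqref{eq:ThisIsWhereWeCheat} with $w = q$ and $w_h = q_h$, combined with the H\"older regularity $q \in C^{0,\nu}(\bar\Omega)$ from Proposition~\ref{prop:regularity}: a standard interpolation bound gives $\inf\{\|q - \phi_h\|_{L^\infty} : \phi_h \in W_h\} \lesssim h^\nu$, so that $\|q - q_h\|_{L^\infty} \lesssim |\log h|\, h^\nu \to 0$ as $h \downarrow 0$. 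Since Theorem~\ref{thm:qpositive} (or the alternatives in Remark~\ref{rem:otherConditionsforpositivity}) furnishes a lower bound $q \geq q_0 > 0$, for $h$ small enough we have $\|q - q_h\|_{L^\infty} \leq q_0/2$, whence $q_h \geq q_0/2 > 0$ on $\bar\Omega$. The same estimate yields the uniform upper bound $\|q_h\|_{L^\infty} \leq \|q\|_{L^\infty} + q_0/2$. Consequently $\tilde\alpha_h = \alpha_0/q_h$ is continuous and satisfies $h$--independent bounds $0 < \underline\alpha \leq \tilde\alpha_h \leq \overline\alpha$.

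Finally, with $\tilde\alpha_h$ bounded and bounded below, I would close the argument for \eqref{eq:linDarcydiscrete}, which is a discrete mixed problem. The bilinear form $(\bu_h, \bv_h) \mapsto \int_\Omega \tilde\alpha_h \bu_h \cdot \bv_h$ is coercive on all of $\bX_h$ with constant $\underline\alpha$ (in particular on the discrete kernel of the divergence constraint), while the pair $(\bX_h, M_h)$ satisfies the discrete inf--sup condition \eqref{eq:LBB} uniformly in $h$. The discrete Babu\v{s}ka--Brezzi theory, the discrete counterpart of \cite[Theorem 2.34]{Guermond-Ern}, then produces a unique $(\bu_h, p_h)$, and uniformity of the stability constants follows from the uniformity of $\underline\alpha$, $\overline\alpha$, and $\beta$.

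The main obstacle is the positivity step: everything hinges on the max norm estimate \eqref{eq:ThisIsWhereWeCheat}, whose validity for mixed boundary conditions on Lipschitz domains is, as the preceding remark notes, not available off the shelf. Granting \eqref{eq:ThisIsWhereWeCheat}, the remaining difficulty is essentially bookkeeping: verifying that the logarithmic factor is dominated by the $h^\nu$ approximation rate so that the smallness threshold on $h$ is meaningful, and tracking the $h$--independence of $\underline\alpha$ and $\overline\alpha$ in order to obtain \emph{uniform} well posedness rather than well posedness for each fixed $h$.
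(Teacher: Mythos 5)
Your proposal is correct and follows essentially the same route as the paper: coercivity of $\calA_\bef$ inherited on the conforming subspace $M_h$ gives a unique $q_h$, then Galerkin orthogonality together with the max norm estimate \eqref{eq:ThisIsWhereWeCheat} and the H\"older regularity of $q$ yields $\|q-q_h\|_{L^\infty}\lesssim h^\nu|\log h|$, so that for small $h$ the bound $q_h\geq q_0/2$ makes $\tilde\alpha_h$ uniformly admissible and the discrete inf--sup condition \eqref{eq:LBB} closes the argument via the discrete saddle point theory. The only cosmetic difference is that the paper phrases the interpolation step through the Lagrange interpolant $I_hq$ rather than the infimum over $W_h$, which is immaterial.
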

\begin{proof}
By conformity, coercivity of $\calA_\bef$ is inherited to $M_h$ so that problem \eqref{eq:convdiffdiscrete} has a unique solution $q_h \in W_h$. Now, since $q_h \in W_h$ is the Galerkin approximation of $q \in H^1(\Omega)$, owing to \eqref{eq:ThisIsWhereWeCheat} we have that
\[
  \| q - q_h \|_{L^\infty} \leq c |\log h| \| q - I_h q \|_{L^\infty}
\]
where $I_h$ denotes the Lagrange interpolant. Theorem~\ref{thm:ContinuousDone} now implies that $q \in C^{0,\nu}(\bar\Omega)$ for some $\nu>0$ so that,
\[
  \| q - q_h \|_{L^\infty} \leq c h^\nu |\log h| |q|_{C^{0,\nu}},
\]
which, if $h$ is sufficiently small implies, for every $x \in \bar\Omega$,
\[
  \frac{q_0}2 \leq q(x) - ch^\nu |\log h| |q|_{C^{0,\nu}} \leq q_h(x) \leq q(x)+ ch^\nu |\log h| |q|_{C^{0,\nu}} \leq 2 \| q \|_{L^\infty}.
\]
The previous reasoning shows that the coefficient $\tilde \alpha_h$, defined in \eqref{eq:defofalphah}, is a bounded and positive function uniformly in $h$. Therefore, since the discrete inf--sup condition \eqref{eq:LBB} holds, we again invoke \cite[Theorem 2.34]{Guermond-Ern} to conclude that problem \eqref{eq:linDarcydiscrete} is uniformly well posed.
\end{proof}

\subsection{Error analysis}
\label{sub:errAnalysis}

We now proceed with the error analysis of scheme \eqref{eq:convdiffdiscrete}---\eqref{eq:linDarcydiscrete}.

\begin{theorem}[error estimates]
In the setting of Theorem~\ref{thm:discrWellPosed} we have that, if $h$ is sufficiently small,
\begin{align*}
  \| \GRAD (q-q_h) \|_{\bL^2} & \leq c \inf_{\phi_h \in W_h} \| \GRAD(q-\phi_h) \|_{\bL^2}, \\
  \| \bU - \bu_h \|_{\bL^2} + \| \GRAD (P-p_h) \|_{\bL^2} & \leq c \left( \inf_{\bv_h \in \bX_h} \| \bU - \bv_h \|_{\bL^2} + \inf_{r_h \in M_h}\| \GRAD (P-r_h) \|_{\bL^2} \right. \\ &+ \left. \| q -q_h \|_{L^\infty} \right),
\end{align*}
where, in all estimates, the constants may depend on $(q,\bU,P)$ but are independent of $h$.
\end{theorem}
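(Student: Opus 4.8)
The plan is to prove the two error estimates separately, since the convection-diffusion problem \eqref{eq:convdiffdiscrete} decouples from the Darcy problem \eqref{eq:linDarcydiscrete}. For the first estimate on $\|\GRAD(q-q_h)\|_{\bL^2}$, this is a standard C\'ea-type argument: since $q_h$ is the Galerkin projection of $q$ with respect to $\calA_\bef$, I would use the Galerkin orthogonality $\calA_\bef(q-q_h,\phi_h)=0$ for all $\phi_h\in M_h$, together with the boundedness and coercivity of $\calA_\bef$ established in Proposition~\ref{prop:coercive}. Writing $q-q_h = (q-\phi_h) + (\phi_h - q_h)$ for an arbitrary $\phi_h \in W_h$ with $\phi_h - 1 \in M_h$, coercivity on the error in $M_h$ and boundedness give the quasi-optimality with the infimum over the discrete space. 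One subtlety is that the natural test functions lie in $M_h$ rather than $W_h$, but since the difference of two functions equal to $1$ on $\Gamma_w$ lies in $M_h$, the interpolation can be carried out in $W_h$ in the usual way.

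\medskip

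\textbf{For the Darcy estimate}, the plan is to treat this as a perturbed saddle-point problem where the perturbation comes from replacing $\tilde\alpha$ by $\tilde\alpha_h$. First I would invoke Theorem~\ref{thm:discrWellPosed}, which guarantees that $\tilde\alpha_h$ is bounded and bounded below uniformly in $h$, and that together with the discrete inf--sup condition \eqref{eq:LBB} gives uniform stability of the discrete Darcy operator via \cite[Theorem 2.34]{Guermond-Ern}. The key algebraic step is to subtract the continuous equations \eqref{eq:linDarcy} (tested against discrete functions) from the discrete equations \eqref{eq:linDarcydiscrete}. This produces a consistency error of the form $\int_\Omega(\tilde\alpha_h\bu_h - \tilde\alpha\bU)\cdot\bv_h$, which I would split as
\[
  \int_\Omega \tilde\alpha_h(\bu_h-\bU)\cdot\bv_h + \int_\Omega(\tilde\alpha_h-\tilde\alpha)\bU\cdot\bv_h.
\]
The first term is absorbed into the left-hand side via coercivity of $\tilde\alpha_h$ on the velocity, while the second is the genuine perturbation term. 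Controlling it requires estimating $\|\tilde\alpha_h-\tilde\alpha\|_{L^\infty}$; since both coefficients are $\alpha_0/q$ and $\alpha_0/q_h$ with $q,q_h$ bounded below by $q_0/2$ uniformly, a direct computation gives $|\tilde\alpha-\tilde\alpha_h| = \alpha_0|q-q_h|/(q q_h) \leq C\|q-q_h\|_{L^\infty}$, which explains the appearance of the $\|q-q_h\|_{L^\infty}$ term on the right-hand side. This step is exactly where the uniform positivity from Theorem~\ref{thm:discrWellPosed} is essential.

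\medskip

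\textbf{The main obstacle} I anticipate is the handling of the saddle-point structure for the velocity--pressure pair rather than a single coercive equation. Unlike the scalar $q$ problem, the Darcy system requires a Fortin-type or abstract Brezzi argument: I would first estimate the velocity error in $\bL^2$ by testing the momentum equation against a discrete divergence-free velocity, then recover the pressure-gradient error using the discrete inf--sup condition \eqref{eq:LBB} applied to the residual of the momentum equation. Care is needed because the perturbation term $\int_\Omega(\tilde\alpha_h-\tilde\alpha)\bU\cdot\bv_h$ depends on $\bU$, which is why the constant is allowed to depend on the exact solution $(q,\bU,P)$. Assembling the velocity and pressure bounds, invoking the triangle inequality to pass from the errors against discrete interpolants to the infima over $\bX_h$ and $M_h$, and collecting the perturbation contribution as $\|q-q_h\|_{L^\infty}$ yields the stated estimate. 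The logarithmic factor and the rate in $h$ are deferred to the interpolation estimates and need not be made explicit here, since the theorem is stated in quasi-optimal form with an abstract $\|q-q_h\|_{L^\infty}$ term.
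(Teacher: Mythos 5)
Your proposal is correct and follows essentially the same route as the paper: a C\'ea argument for $q-q_h$ using the coercivity and boundedness of $\calA_\bef$ from Proposition~\ref{prop:coercive}, and for the Darcy pair the same error identity with the perturbation term $\int_\Omega(\tilde\alpha_h-\tilde\alpha)\bU\cdot\bv_h$, handled by discrete inf--sup stability and the uniform lower bound on $q$ and $q_h$ to get $|\tilde\alpha-\tilde\alpha_h|\leq C|q-q_h|$. No gaps.
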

\begin{proof}
The estimate on $q - q_h$ is immediate. Let us focus on the estimates between $(\bU,P)$ and $(\bu_h,p_h)$. Setting $\bv = \bv_h \in \bX_h$ and $r = r_h \in M_h$ in \eqref{eq:linDarcy} yields
\begin{align*}
  \int_\Omega \left( \tilde \alpha_h (\bU-\bu_h) + \GRAD(P-p_h) \right)\cdot \bv_h &= \int_\Omega (\tilde \alpha_h - \tilde \alpha)\bU\cdot\bv_h, & \forall \bv_h \in \bX_h, \\
  \int_\Omega (\bU - \bu_h)\cdot\GRAD r_h &= 0, & \forall r_h \in M_h.
\end{align*}

Owing to the fact that $\tilde \alpha_h$ is uniformly bounded and positive, we can invoke discrete stability to conclude, from the previous identities, that
\begin{align*}
  \| \bU - \bu_h \|_{\bL^2} + \| \GRAD (P-p_h) \|_{\bL^2} &\le c \left( \inf_{\bv_h \in \bX_h} \| \bU - \bv_h \|_{\bL^2} + \inf_{r_h \in M_h}\| \GRAD (P-r_h) \|_{\bL^2} \right. \\ &+ \left. \| \tilde\alpha -\tilde \alpha_h \|_{L^\infty} \| \bU \|_{\bL^2} \right).
\end{align*}
Notice now that, for $x \in \bar\Omega$,
\[
  |\tilde\alpha(x) -\tilde \alpha_h(x)| = \alpha_0 \left| \frac1{q(x)} - \frac1{q_h(x)} \right| \leq \frac{2\alpha_0}{q_0^2} |q(x) - q_h(x)|,
\]
which allows us to conclude.
\end{proof}

The error estimate of the previous theorem can be combined with the regularity of Proposition~\ref{prop:regularity} to obtain
\begin{align*}
  \| \bU - \bu_h \|_{\bL^2} + \| \GRAD (P-p_h) \|_{\bL^2} &\leq c \left( \inf_{\bv_h \in \bX_h} \| \bU - \bv_h \|_{\bL^2} + \inf_{r_h \in M_h}\| \GRAD (P-r_h) \|_{\bL^2} \right. \\ &+ \left. h^\nu|\log h| \right).
\end{align*}
We end by commenting that, if further regularity on $q$ can be asserted, the pointwise estimate can be improved and a higher rate can be obtained.

\section{Numerical illustrations}
\label{sec:Numerics}

Numerical illustrations of the error estimates we proved in Section~\ref{sub:errAnalysis} were presented in \cite[Section 5]{MR2769053}. Here then we confine ourselves to illustrating the positivity of the variable $q$, which is at the heart of the splitting formulation.

The computations were carried out with the help of the \texttt{FreeFem++} package \cite{MR3043640}. We used a piecewise linear discretization of the variable $q$.

\subsection{Smooth domain}
\label{sub:SmoothDomain}
In this case we consider the domain to be the annulus $\Omega = B(0,4) \setminus \overline{B(0,1)}$. We set up the forcing to be as
\[
  \bef = \kappa \left(\frac{(r-5)^2}r \frac{x}r, \frac{(r-5)^2}r \frac{y}r \right)^\intercal,  \quad \alpha_0=1, \quad \gamma = 2,
\]
where $r^2 = x^2 + y^2$, and $\kappa >0$ is to be chosen. Notice that
\[
  \DIV \bef = \kappa \left( 1 - \frac{25}{r^2} + \frac{(r-5)^2}r \right) < 0, \text{ in } \Omega, \quad \bef \cdot \bn \geq 0, \text{ on } \Gamma,
\]
so that this forcing fits within our theory. We consider two cases:
\begin{enumerate}[1.]
  \item $g = 0$ and $\kappa = \tfrac6{10}$, which fits the framework of Theorem~\ref{thm:qpositive},
  \item $g=\tfrac1{10}$ with $\kappa = 1$  that does not. See, however, Remark~\ref{rem:otherConditionsforpositivity}.
\end{enumerate}

\begin{table}
  \begin{center}
    \begin{tabular}{ccc}
      $g=0, \ \kappa = \tfrac6{10}$ & ~~ & $g=\tfrac1{10}, \ \kappa = 1$ \\
      {
        \begin{tabular}{r|r}
          \textbf{NDOFs} & $\min\{q_h(x) : x \in \overline{\Omega} \}$ \\
          \hline
          621 & -0.187745  \\
          2358 & 1.38403e-07 \\
          5247 & 2.89064e-07\\
          9549 & 3.57685e-07 \\
          14722 & 3.93091e-07 \\
          20476 & 4.12966e-07 \\
          61489 & 4.4366e-07 \\
          229441 & 4.57017e-07
        \end{tabular}
      }& &
      {
        \begin{tabular}{r|r}
          \textbf{NDOFs} & $\min\{q_h(x) : x \in \overline{\Omega} \}$ \\
          \hline
          621 & -0.478815 \\
          2358 & -0.0668046 \\
          5247 & 0.0280257 \\
          9549 & 0.028424 \\
          14722 & 0.0285943 \\
          20476 & 0.0286825 \\
          61489 & 0.0287896 \\
          229441 & 0.0288382
        \end{tabular}
      }
    \end{tabular}
  \end{center}
  \caption{Minimum value of $q_h$ for the numerical experiments of Section~\ref{sub:SmoothDomain}.}
  \label{tab:SmoothDomain}
\end{table}

Table~\ref{tab:SmoothDomain} shows the minimal value of $q_h$ as a function of the number of degrees of freedom. Notice that, as Theorem~\ref{thm:discrWellPosed} shows, positivity can only be guaranteed for a sufficiently small mesh (sufficiently large number of degrees of freedom). After that, the minimum of $q_h$ seems to stabilize at a positive value.

\subsection{Polygonal domain}\label{sub:polygon}

\begin{figure}
  \begin{center}
    \begin{tikzpicture}
      \draw[fill=white!70!black] (-3.25, -3.25)  rectangle  (3.25, 3.25);
      
      \draw[fill=white] (-0.5,-0.5) rectangle (0.5,0.5);
      \fill[white] (3.15, 3.15) rectangle (3.26, 3.26);
      
      \draw (3.25, 3.15) -- (3.15,3.15) -- (3.15,3.25);
    \end{tikzpicture}
  \end{center}
  \caption{The polygonal domain for the numerical experiment of Section~\ref{sub:polygon}}
\end{figure}
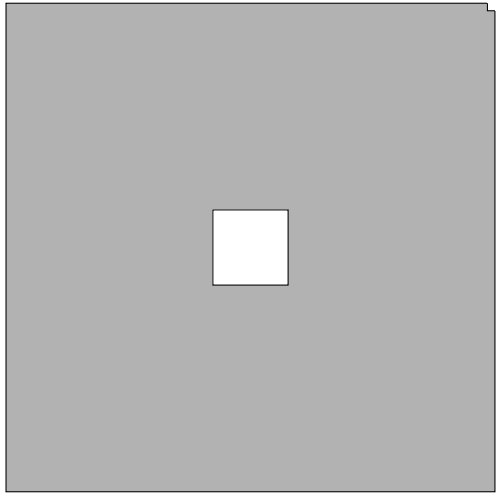

We now consider a polygonal domain. We set $a=\tfrac1{10}$, $b=\tfrac{a}5$, and $c=0.65$ and define
\[
  \Omega = (-c,c)^2\setminus\left( (-a,a)^2 \cup (c-b,c)^2 \right).
\]
The problem data is $\gamma = 2$, $\alpha_0 = 4e^\gamma$ and we set the problem data so that the exact solution is
\[
  \bu = \left( \frac{x}{x^2 + y^2} + y, \frac{y}{x^2 + y^2} - x\right)^\intercal, \qquad p = (x^2 - a^2)(y^2 - a^2).
\]

\begin{table}
  \begin{center}
    \begin{tabular}{r|r}
      \textbf{NDOFs} & $\min\{q_h(x) : x \in \overline{\Omega} \}$ \\
      \hline
      278 & 0.728008 \\
      2406 & 0.712993 \\
      9452 & 0.711648 \\
      35806 & 0.711294 \\
      25435 & 0.711323 \\
      99579 & 0.711305 \\
      404843 & 0.711387
    \end{tabular}
  \end{center}
  \caption{Minimum value of $q_h$ for the numerical experiment of Section~\ref{sub:polygon}.}
  \label{tab:polygon}
\end{table}

The minimal value of $q_h$ as a function of the number of degrees of freedom is illustrated in Table~\ref{tab:polygon}. In this case, positivity is obtained for all values of $h$.

\section*{Acknowledgments}
The core of this work was completed during the Fall of 2019, in a pre COVID-19 world, where traveling and research visits still existed. ZKB would like to thank the Department of Mathematics at the University of Tennessee Knoxville for its hospitality during his visit. ZKB also gratefully thanks NORHED HU-PhD-Math-Stat-Science project for financial support. The work of AJS has been partially supported by NSF grant DMS-1720213. TM is partially supported by NSF grant DMS-1910180.

\bibliographystyle{siamplain}
\bibliography{biblio}

\begin{thebibliography}{10}

\bibitem{MR2727929}
{\sc M.~Aza\"{\i}ez, F.~Ben~Belgacem, C.~Bernardi, and N.~Chorfi}, {\em
  Spectral discretization of {D}arcy's equations with pressure dependent
  porosity}, Appl. Math. Comput., 217 (2010), pp.~1838--1856,
  \url{http://dx.doi.org/10.1016/j.amc.2010.06.014},
  \url{https://doi.org/10.1016/j.amc.2010.06.014}.

\bibitem{MR3097958}
{\sc D.~Boffi, F.~Brezzi, and M.~Fortin}, {\em Mixed finite element methods and
  applications}, vol.~44 of Springer Series in Computational Mathematics,
  Springer, Heidelberg, 2013,
  \url{http://dx.doi.org/10.1007/978-3-642-36519-5},
  \url{https://doi.org/10.1007/978-3-642-36519-5}.

\bibitem{ChiccoMaurizio1970Pdmg}
{\sc M.~Chicco}, {\em Principio di massimo generalizzato e valutazione del
  primo autovalore per problemi ellittici del secondo ordine di tipo
  variazionale}, Annali di Matematica Pura ed Applicata, 87 (1970), pp.~1--9.

\bibitem{Chicco2005}
{\sc M.~Chicco}, {\em Harnack inequality for solutions of mixed boundary value
  problems containing boundary terms}, Rend. Accad. Naz. Sc. XL Mem. Mat.
  Appl., 29 (2005), pp.~149--152.

\bibitem{MR1115237}
{\sc P.~G. Ciarlet}, {\em Basic error estimates for elliptic problems}, in
  Handbook of numerical analysis, {V}ol. {II}, Handb. Numer. Anal., II,
  North-Holland, Amsterdam, 1991, pp.~17--351.

\bibitem{CiarletBook}
{\sc P.~G. Ciarlet}, {\em The finite element method for elliptic problems},
  SIAM, Philadelphia, PA, 2002,
  \url{http://dx.doi.org/10.1137/1.9780898719208}.

\bibitem{2004.09341}
{\sc L.~Diening, T.~Scharle, and E.~Süli}, {\em Uniform hölder-norm bounds
  for finite element approximations of second-order elliptic equations}, 2020,
  \href{http://arxiv.org/abs/arXiv:2004.09341}{arXiv:arXiv:2004.09341}.

\bibitem{Guermond-Ern}
{\sc A.~Ern and J.-L. Guermond}, {\em Theory and practice of finite elements},
  vol.~159 of Applied Mathematical Sciences, Springer-Verlag, New York, 2004.

\bibitem{MR2597943}
{\sc L.~C. Evans}, {\em Partial differential equations}, vol.~19 of Graduate
  Studies in Mathematics, American Mathematical Society, Providence, RI,
  second~ed., 2010, \url{http://dx.doi.org/10.1090/gsm/019},
  \url{https://doi.org/10.1090/gsm/019}.

\bibitem{MR737190}
{\sc D.~Gilbarg and N.~S. Trudinger}, {\em Elliptic partial differential
  equations of second order}, vol.~224 of Grundlehren der Mathematischen
  Wissenschaften [Fundamental Principles of Mathematical Sciences],
  Springer-Verlag, Berlin, second~ed., 1983,
  \url{https://doi.org/10.1007/978-3-642-61798-0}.

\bibitem{MR2769053}
{\sc V.~Girault, F.~Murat, and A.~Salgado}, {\em Finite element discretization
  of {D}arcy's equations with pressure dependent porosity}, M2AN Math. Model.
  Numer. Anal., 44 (2010), pp.~1155--1191,
  \url{http://dx.doi.org/10.1051/m2an/2010019},
  \url{https://doi.org/10.1051/m2an/2010019}.

\bibitem{MR851383}
{\sc V.~Girault and P.-A. Raviart}, {\em Finite element methods for
  {N}avier-{S}tokes equations}, vol.~5 of Springer Series in Computational
  Mathematics, Springer-Verlag, Berlin, 1986,
  \url{https://doi.org/10.1007/978-3-642-61623-5}.
\newblock Theory and algorithms.

\bibitem{Grisvard}
{\sc P.~Grisvard}, {\em Elliptic problems in nonsmooth domains}, Pitman
  Advanced Pub. Program, 1985.

\bibitem{MR3043640}
{\sc F.~Hecht}, {\em New development in {F}ree{F}em++}, J. Numer. Math., 20
  (2012), pp.~251--265.

\bibitem{MR1941780}
{\sc D.~Le and H.~Smith}, {\em Strong positivity of solutions to parabolic and
  elliptic equations on nonsmooth domains}, J. Math. Anal. Appl., 275 (2002),
  pp.~208--221, \url{http://dx.doi.org/10.1016/S0022-247X(02)00314-1},
  \url{https://doi.org/10.1016/S0022-247X(02)00314-1}.

\bibitem{MR3651062}
{\sc D.~Leykekhman and B.~Li}, {\em Maximum-norm stability of the finite
  element {R}itz projection under mixed boundary conditions}, Calcolo, 54
  (2017), pp.~541--565, \url{http://dx.doi.org/10.1007/s10092-016-0198-8},
  \url{https://doi.org/10.1007/s10092-016-0198-8}.

\bibitem{MR3470741}
{\sc D.~Leykekhman and B.~Vexler}, {\em Finite element pointwise results on
  convex polyhedral domains}, SIAM J. Numer. Anal., 54 (2016), pp.~561--587,
  \url{http://dx.doi.org/10.1137/15M1013912},
  \url{https://doi.org/10.1137/15M1013912}.

\bibitem{LIEBERMAN2001178}
{\sc G.~M. Lieberman}, {\em Pointwise estimates for oblique derivative problems
  in nonsmooth domains}, Journal of Differential Equations, 173 (2001), pp.~178
  -- 211, \url{http://dx.doi.org/https://doi.org/10.1006/jdeq.2000.3939},
  \url{http://www.sciencedirect.com/science/article/pii/S0022039600939391}.

\bibitem{NITTKA2011860}
{\sc R.~Nittka}, {\em Regularity of solutions of linear second order elliptic
  and parabolic boundary value problems on {L}ipschitz domains}, Journal of
  Differential Equations, 251 (2011), pp.~860 -- 880,
  \url{http://dx.doi.org/https://doi.org/10.1016/j.jde.2011.05.019}.

\bibitem{NittkaRobin2013Qeap}
{\sc R.~Nittka}, {\em Quasilinear elliptic and parabolic {R}obin problems on
  {L}ipschitz domains}, Nonlinear Differential Equations and Applications
  NoDEA, 20 (2013), pp.~1125--1155.

\bibitem{MR2292356}
{\sc K.~R. Rajagopal}, {\em On a hierarchy of approximate models for flows of
  incompressible fluids through porous solids}, Math. Models Methods Appl.
  Sci., 17 (2007), pp.~215--252,
  \url{http://dx.doi.org/10.1142/S0218202507001899},
  \url{https://doi.org/10.1142/S0218202507001899}.

\bibitem{STAMPACCHIA}
{\sc G.~Stampacchia}, {\em Le probl\`eme de {D}irichlet pour les \`equations
  elliptiques du second ordre \`a coefficients discontinus}, Ann. Inst.
  Fourier, 15 (1965), pp.~189--258.

\end{thebibliography}
\end{document}